\renewcommand{\le}{\leqslant}
\renewcommand{\ge}{\geqslant}
\newtheorem{theorem}{Theorem}[section]
\newtheorem{lemma}[theorem]{Lemma}
\newtheorem{corollary}[theorem]{Corollary}
\theoremstyle{definition}
\newtheorem{example}[theorem]{Example}
\newtheorem{remark}[theorem]{Remark}
\newcommand{\tocorrect}[1]{}
\setlist[enumerate,1]{label=(\roman*)}
\numberwithin{equation}{section}
\begin{document}
	\title{Every expansive $ m $-concave operator has $ m $-isometric dilation}
	\author{Michał Buchała\orcidlink{0000-0001-5272-9600}}
	\email{mbuchala@agh.edu.pl}
	
	\address{AGH University of Krakow, Faculty of Applied Mathematics, al. A. Mickiewicza 30, 30-059 Krakow}
	\begin{abstract}
        The aim of this paper is to obtain $ m $-isometric dilation of expansive $ m $-concave operator on Hilbert space. The obtained dilation is shown to be minimal. The matrix representation of this dilation is given. It is also proved that in case of 3-concave operators the assumption on expansivity is not necessary. The paper contains an example showing that minimal $ m $-isometric dilations may not be isomorphic.
	\end{abstract}
   	\maketitle
	\section{Introduction}
\label{SecIntroduction}
$ m $-isometric operators were introduced by Agler in \cite{aglerDisconjugacyTheoremToeplitz}; these are operators satisfying the equality
\begin{equation*}
    \sum_{k=0}^{m} (-1)^{m-k}\binom{m}{k}T^{\ast k}T^{k} = 0
\end{equation*}
and can be viewed as a certain generalization of isometric operators (in particular, 1-isometries are precisely isometries). The class of $ m $-isometries plays a significant role in operator theory and has been studied extensively by many authors (see e.g. \cite{bermudezLocalSpectralPropertiesOfMIsometric,bermudezProductsMisometries,jablonskiMisometricOperatorsTheirSpectralProperties,anandSolutionToCauchyDualSubnormalityProblem,RichterRepresentationCyclic2Isometries}). We refer to \cite{aglerMIsometricTransformationsOfHilbertSpaceI,aglerMIsometricTransformationsOfHilbertSpaceII,aglerMIsometricTransformationsOfHilbertSpaceIII} for the comprehensive study of fundamental properties of such operators. \par
The celebrated Sz.-Nagy-Foias theorem (see \cite[Chapter I, Theorem 4.1]{SzNagyHarmonicAnalysisOfOperators}) states that every contractive operator has an isometric dilation, that is, for every $ T\in \mathbf{B}(H) $ with $ \lVert T\rVert \le 1 $ there exists an isometric operator $ V\in \mathbf{B}(K) $ on a larger Hilbert space $ K\supset H $ satisfying
\begin{equation*}
    P_{H}V^{n}|_{H} = T^{n}, \qquad n\in \mathbb{N};
\end{equation*}
the matrix representation of the dilation takes the form
\begin{equation}
    \label{FormDilationSzNagy}
    V = \begin{bmatrix}
        T & 0 & 0 & \ddots\\
        D_{T} & 0 & 0 & \ddots\\
        0 & I & 0 & \ddots\\
        \ddots & \ddots & \ddots & \ddots
    \end{bmatrix},
\end{equation}
where $ D_{T} = (I-T^{\ast}T)^{\frac{1}{2}} $. Dilations and liftings have been an object of long-term research in operator theory and have found many applications (see e.g. \cite{foiasCommutantLiftingInterpolationProblem}). In \cite[Theorem 5.80]{aglerMIsometricTransformationsOfHilbertSpaceII} Agler and Stankus proved that every 2-isometry possesses so-called Brownian unitary lifting. In \cite{badeaCauchyDual2IsometricLiftingsOfConcave} and \cite{badeaHilbertSpaceOperatorsWith2IsometricDilations} Badea and Suciu studied 2-isometric liftings and showed, in particular, that every 2-concave operator has 2-isometric lifting; the matrix representation of the obtained lifting is quite similar to \eqref{FormDilationSzNagy}. In \cite{suciuCouplingOfOperatorsWith2IsometriesIn3IsometricLiftings} the authors investigated 3-isometric liftings with 2-isometric part; the obtained characterization of existence of such a lifting was applied, in particular, to show that expansive 3-concave operators have 3-isometric liftings. In \cite{badeaHighOrderIsometricLiftings} it was shown that the operators, the powers of which grow polynomially, have $ m $-isometric liftings for some $ m\in \mathbb{N}_{3} $; some special cases were also studied -- in particular, it was proved that each 2-convex operator $ T\in \mathbf{B}(H) $, which satifies the necessary condition
\begin{equation*}
    \sup_{n\in \mathbb{N}_{1}} \frac{\lVert T^{n}\rVert^{2}}{n} < \infty,
\end{equation*}
has a 2-isometric lifting (see \cite[Theorem 3.2]{badeaHighOrderIsometricLiftings}). In \cite{suciuOperatorsExpansiveMisometric} expansive $ m $-isometric liftings with isometric part were investigated. For more results on dilations and liftings we refer also to \cite{suciuBrownianExtensionsThreeIsometries,suciuOnOperatorsWith2IsometricLiftings,suciuTriangulationsOfOperators2IsometricLiftings}.\par
In this paper we are interested in $ m $-isometric dilations of the form
\begin{equation}
    \label{FormDilationMatrixIntr}
    W = \begin{bmatrix}
        T & 0 & 0 & \ddots\\
        U & 0 & 0 & \ddots\\
        0 & S_{1} & 0 & \ddots\\
        0 & 0 & S_{2} & \ddots\\
        \ddots & \ddots & \ddots & \ddots
    \end{bmatrix},
\end{equation}
where $ U $ is nonnegative and $ (S_{j})_{j\in \mathbb{N}_{1}} $ is the sequence of positive and invertible weights of $ m $-isometric unilateral weighted shift. The paper is organized as follows. Section \ref{SecPreliminaries} is devoted to set up a notation and terminology and to review some of standard facts in operator theory. Section \ref{SecMIsomDilations} contains the main results. In Theorem \ref{ThmDilationOfMConcave} we prove that expansive $ m $-concave operators, that is, the operators satisfying the inequalities
\begin{align*}
    &T^{\ast}T - I \ge 0,\\
    &\sum_{k=0}^{m} (-1)^{m-k}\binom{m}{k}T^{\ast k}T^{k} \le 0,
\end{align*}
have $ m $-isometric dilations of the form \eqref{FormDilationMatrixIntr}. In the particular case of 3-concave operators we show that the assumption on expansivity can be dropped (see Theorem \ref{ThmDilationOf3ConcaveWithoutExpansivity}), which improves \cite[Theorem 3.1]{suciuCouplingOfOperatorsWith2IsometriesIn3IsometricLiftings}. We investigate the minimality of the dilation \eqref{FormDilationMatrixIntr} -- Lemma \ref{LemMinimalDilations} implies that the dilations obtained in Theorems \ref{ThmDilationOfMConcave} and \ref{ThmDilationOf3ConcaveWithoutExpansivity} are minimal. At the end, we give an example of two 2-isometric dilations of 2-concave operator, which are minimal but not isomorphic.
	\section{Preliminaries}
\label{SecPreliminaries}
In what follows, $ \mathbb{Z} $ and $ \mathbb{N} $ stand for the sets of integers and non-negative integers, respectively. For $ k\in \mathbb{N} $ we set
\begin{equation*}
    \mathbb{N}_{k} = \{n\in \mathbb{N}\colon n\ge k \}.
\end{equation*}
By $ \mathbb{R} $ and $ \mathbb{C} $ we denote the fields of real and complex numbers respectively. For simplicity we write $ (a_{i})_{i\in I} \subset A $, if $ (a_{i})_{i\in I} $ is the sequence of elements of a set $ A $.\par
If $ R $ is an algebra, then $ R[z] $ stands for the algebra of all polynomials with coefficients in $ R $ with standard addition and multiplication. For $ n\in \mathbb{N} $, $ R_{n}[z] $ denotes the vector space of all polynomials with coefficients in $ R $ of degree less than or equal to $ n $.\par
Throughout this paper all Hilbert spaces are assumed to be complex. Let $ H $ be a Hilbert space. If $ (M_{n})_{n\in \mathbb{N}} $ is a sequence of subspaces of $ H $, then $ \bigvee_{n=0}^{\infty}M_{n} $ denotes the closed subspace spanned by $ \bigcup_{n=0}^{\infty}M_{n} $. By $ \mathbf{B}(H) $ we denote the $ C^{\ast} $-algebra of all linear and bounded operators on $ H $. If $ T\in \mathbf{B}(H) $, then $ \mathcal{N}(T) $ and $ \mathcal{R}(T) $ stands for the kernel and the range of $ T $, respectively; we have the following so-called range-kernel decomposition:
\begin{equation}
    \label{FormRangeKernelDecomp}
    H = \mathcal{N}(T)\oplus \overline{\mathcal{R}(T^{\ast})}.
\end{equation}
We say that $ T\in \mathbf{B}(T) $ is non-negative (and we write $ T\ge 0 $) if $ \langle Tf,f\rangle \ge 0 $ for all $ f\in H $; we call $ T $ positive if $ \langle Tf,f\rangle > 0 $ for all $ f\in H $, $ f\not=0 $. For $ T\in \mathbf{B}(H) $ and $ m\in \mathbb{N}_{1} $ we set
\begin{equation*}
    \beta_{m}(T) := \sum_{k=0}^{m} (-1)^{m-k}\binom{m}{k}T^{\ast k}T^{k}.
\end{equation*}
Recall that an operator $ T\in \mathbf{B}(H) $ is $ m $-concave ($ m\in \mathbb{N}_{1} $) if $ \beta_{m}(T) \le 0 $ and $ T $ is $ m $-isometric if $ \beta_{m}(T) = 0 $. We say that $ T\in \mathbf{B}(H) $ is expansive if $ \lVert Tf\rVert \ge \lVert f\rVert $ for all $ f\in H $ (equivalently, $ \beta_{1}(T) \ge 0 $).\par
If $ K $ is a  Hilbert spaces such that $ H\subset K $ (in sense of isometric embeddings) and $ T\in \mathbf{B}(H) $, $ S\in \mathbf{B}(K) $, then $ S $ is called a dilation of $ T $ if $ P_{H}S^{n}|_{H} = T^{n} $ for every $ n\in \mathbb{N} $, where $ P_{H} $ is the orthogonal projection of $ K $ onto $ H $. The dilation $ S\in \mathbf{B}(K) $ of an operator $ T\in \mathbf{B}(H) $ is called minimal if 
\begin{equation*}
    K = \bigvee_{n=0}^{\infty} S^{n}H.
\end{equation*}
Suppose $ S_{1}\in \mathbf{B}(K) $ and $ S_{2}\in \mathbf{B}(K^{\prime}) $ are dilations of $ T\in \mathbf{B}(H) $. Following \cite[Chapter I]{SzNagyHarmonicAnalysisOfOperators} we call the dilations $ S_{1} $ and $ S_{2} $ isomorphic if there exists a unitary operator $ V\colon K\to K^{\prime} $ such that
\begin{align*}
    Vh &= h, \qquad h\in H,\\
    VS_{1} &= S_{2}V. 
\end{align*}\par
Define
\begin{equation*}
    \ell^{2}(\mathbb{N},H) := \{(f_{j})_{j\in \mathbb{N}}\subset H\colon \sum_{j=0}^{\infty} \lVert f_{j}\rVert^{2} < \infty \};
\end{equation*}
this is a Hilbert space with the inner product given by
\begin{equation*}
    \langle f,g\rangle := \sum_{j=0}^{\infty} \langle f_{j},g_{j}\rangle, \quad f = (f_{j})_{j\in \mathbb{N}},g = (g_{j})_{j\in \mathbb{N}} \in \ell^{2}(\mathbb{N},H).
\end{equation*}
Every operator $ T\in \mathbf{B}(\ell^{2}(\mathbb{N},H)) $ has a matrix representation $ [T_{i,j}]_{i,j\in \mathbb{N}} $, where $ T_{i,j}\in \mathbf{B}(H) $, satisfying the following formula (see \cite[Chapter 8]{halmosHilbertSpaceProblemBook}):
\begin{equation*}
    Tf = \left( \sum_{j=0}^{\infty} T_{i,j} f_{j} \right)_{i\in \mathbb{N}}, \quad f = (f_{j})_{j\in \mathbb{N}} \in \ell^{2}(\mathbb{N},H).
\end{equation*}
If $ (S_{j})_{j\in \mathbb{N}_{1}}\subset \mathbf{B}(H) $ is a uniformly bounded sequence of operators, then we define the unilateral weighted shift $ S\in \mathbf{B}(\ell^{2}(\mathbb{N},H)) $ with weights $ (S_{j})_{j\in \mathbb{N}_{1}} $ as follows
\begin{equation*}
    Sf = (0,S_{1}f_{0},S_{2}f_{1},\ldots), \quad f=(f_{j})_{j\in \mathbb{N}} \in \ell^{2}(\mathbb{N},H).
\end{equation*}
The matrix representation of $ S $ takes the form
\begin{equation*}
    S = \begin{bmatrix}
        0 & 0 & 0 & 0 & \ddots\\
        S_{1} & 0 & 0 & 0 & \ddots \\
        0 & S_{2} & 0 & 0 & \ddots\\
        0 & 0 & S_{3} & 0 & \ddots\\
        \ddots & \ddots & \ddots & \ddots & \ddots
    \end{bmatrix}.
\end{equation*}

	\section{$ m $-isometric dilations of $ m $-concave operators}
\label{SecMIsomDilations}
The main result of this paper is stated below. It is a generalization of \cite[Theorem 3.1]{suciuCouplingOfOperatorsWith2IsometriesIn3IsometricLiftings}.
\begin{theorem}
    \label{ThmDilationOfMConcave}
    Let $ H $ be a Hilbert space and let $ T\in \mathbf{B}(H) $ be expansive and $ m $-concave ($ m\in \mathbb{N}_{2} $). Then $ T $ has an $ m $-isometric dilation $ W $ on the space $ H\oplus \ell^{2}(\mathbb{N}, H^{\prime}) $, where $ H' \subset H $ is a closed subspace of $ H $, with the following matrix representation
    \begin{equation}
        \label{FormDilationMatrix}
        W = \begin{bmatrix}
            T & 0 & 0 & 0 &  \ddots\\
            U & 0 & 0 & 0 &  \ddots\\
            0 & S_{1} & 0 & 0 &  \ddots \\
            0 & 0 & S_{2} & 0 &  \ddots\\
            0 & 0 & 0 & S_{3} &  \ddots\\
            \ddots & \ddots & \ddots & \ddots & \ddots
        \end{bmatrix},
    \end{equation}
    where $ U\in \mathbf{B}(H,H^{\prime}) $ is nonnegative and $ S_{j}\in \mathbf{B}(H^{\prime}) $ is positive and invertible for every $ j\in \mathbb{N}_{1} $. Moreover, $ U $ can be chosen to satisfy the equality $ T^{\ast}U^{2}T = U^{2} $ and $ S_{1},\ldots,S_{m-2} $ can be chosen to be identity operators on $ H^{\prime} $.
\end{theorem}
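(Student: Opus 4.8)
The plan is to reduce the identity $\beta_m(W)=0$ to a growth condition and then reverse-engineer the entries of \eqref{FormDilationMatrix}. I would use the standard characterization: an operator $A$ is $m$-isometric exactly when $n\mapsto\lVert A^n x\rVert^2$ is, for every $x$, a polynomial in $n$ of degree at most $m-1$ (since $\Delta^m\lVert A^n x\rVert^2=\langle\beta_m(A)A^n x,A^n x\rangle$ for the forward difference $\Delta$, and $\beta_m(A)$ is self-adjoint). First I would note that $W$ is automatically a dilation: the $(0,0)$-entry of $W^n$ is $T^n$ regardless of $U$ and the $S_j$. Writing out the action of \eqref{FormDilationMatrix}, the $j$-th coordinate of $W^n x$ equals $S_j\cdots S_1\,U\,T^{\,n-1-j}h$ for $0\le j\le n-1$ and $S_j\cdots S_{j-n+1}f_{j-n}$ for $j\ge n$; crucially each coordinate depends either only on $h$ or only on $(f_j)_j$, so no cross terms survive in $\lVert W^n x\rVert^2$. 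Thus the growth condition decouples into (I) the weighted shift with weights $(S_j)$ is itself $m$-isometric (this governs the $(f_j)$-part), and (II) the coupling form $\langle\mathcal{G}_n h,h\rangle:=\lVert T^n h\rVert^2+\sum_{j=0}^{n-1}\lVert S_j\cdots S_1 U T^{\,n-1-j}h\rVert^2$ is a polynomial of degree $\le m-1$ for all $h$.

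Setting $Q_j:=U(S_j\cdots S_1)^\ast(S_j\cdots S_1)U\ge 0$, the operators $\mathcal{G}_n$ satisfy the clean recursion $\mathcal{G}_n=T^\ast\mathcal{G}_{n-1}T+Q_{n-1}$ with $\mathcal{G}_0=I$. Imposing the stated normalizations $S_1=\dots=S_{m-2}=I$ gives $Q_0=\dots=Q_{m-2}=U^2$, and the relation $T^\ast U^2T=U^2$ lets the recursion telescope to $\mathcal{G}_k=T^{\ast k}T^k+kU^2$ for $0\le k\le m-1$. Since a polynomial of degree $\le m-1$ is determined by its first $m$ values, this forces $\mathcal{G}_n=\sum_{i=0}^{m-1}\binom{n}{i}\beta_i(T)+nU^2$. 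A short computation using $\beta_{i+1}(T)=T^\ast\beta_i(T)T-\beta_i(T)$ then shows that the consistency requirement $Q_{n-1}=\mathcal{G}_n-T^\ast\mathcal{G}_{n-1}T$ for $n\ge m$ reads exactly $Q_{n-1}=U^2-\binom{n-1}{m-1}\beta_m(T)$. Because $T$ is $m$-concave, $\beta_m(T)\le 0$, so each $Q_{n-1}$ is automatically nonnegative; what remains is to produce $U$ so that these $Q_{n-1}$ factor as $U(\cdots)U$ and to recover genuine weights making the shift $m$-isometric.

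The heart of the matter — and the step I expect to be the main obstacle — is the construction of $U$. Two properties must hold at once: $T^\ast U^2T=U^2$ (so that $D:=U^2$ is invariant under $A\mapsto T^\ast A T$), and the range condition $\mathcal{R}((-\beta_m(T))^{1/2})\subseteq\mathcal{R}(U)$, equivalently $-\beta_m(T)\le\lambda U^2$ for some $\lambda>0$. Without the latter the factorization $Q_{n-1}=U\Sigma_{n-1}U$, and hence the passage to bounded invertible weights $S_{m-1},S_m,\dots$, is impossible, since $Q_{n-1}$ must vanish on $\mathcal{N}(U)$ while $\beta_m(T)$ need not. A useful free ingredient here is that $\beta_{m-1}(T)\ge 0$ holds for \emph{any} bounded $m$-concave operator: the sequence $a_n=\lVert T^n h\rVert^2\ge 0$ has $\Delta^m a_n\le 0$, so $\Delta^{m-1}a_n$ is nonincreasing, and were it ever negative the lower differences would force $a_n\to-\infty$, a contradiction; evaluating $\Delta^{m-1}a_n$ at $n=0$ gives $\beta_{m-1}(T)\ge 0$. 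This makes $T^{\ast n}\beta_{m-1}(T)T^n$ a decreasing net of nonnegative operators, but — as simple weighted-shift examples show — its strong limit can be $0$ while $\beta_m(T)\neq 0$, so it does \emph{not} by itself furnish a $D$ dominating the defect.

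This is precisely where expansivity is essential: it permits passing to the Cauchy dual $T(T^\ast T)^{-1}$, a contraction, whose asymptotic limit (or a suitable monotone-limit/fixed-point argument for the invariance equation $T^\ast D T=D$) yields an invariant $D$ large enough to dominate $-\beta_m(T)$. Once such $U=D^{1/2}$ is in hand, a Douglas-type factorization produces $\Sigma_{n-1}=I+\binom{n-1}{m-1}\,U^{-1}(-\beta_m(T))U^{-1}\ge I$ on $H':=\overline{\mathcal{R}(U)}$, and I would recover the individual weights from the products $S_{n-1}\cdots S_{m-1}$, verifying they can be taken positive, bounded and invertible and that the associated shift is $m$-isometric (so requirement (I) holds as well). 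Combining (I) and (II) gives that $n\mapsto\lVert W^n x\rVert^2$ is a polynomial of degree $\le m-1$ for every $x$, i.e. $\beta_m(W)=0$, completing the construction; the case $m=3$ without expansivity (Theorem \ref{ThmDilationOf3ConcaveWithoutExpansivity}) should then correspond to building the invariant $D$ by hand when $\beta_1(T)$ may fail to be nonnegative.
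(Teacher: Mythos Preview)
Your overall architecture coincides with the paper's: reduce $\beta_m(W)=0$ to two decoupled conditions (the shift with weights $(S_j)$ is $m$-isometric; a coupling identity holds on $H$), construct $U=Q^{1/2}$ with $T^\ast QT=Q$ dominating the defect, and then manufacture the weights. Your recursion $\mathcal G_n=T^\ast\mathcal G_{n-1}T+Q_{n-1}$ and the resulting formula $Q_{n-1}=U^2-\binom{n-1}{m-1}\beta_m(T)$ are a clean repackaging of the paper's verification of \eqref{FormDilationMatrixMIsometric}; the paper carries out the same computation in the equivalent ``$\beta_m$-form'' via Lemma~\ref{LemMIsometricityOfDilation}, and builds the weights from the operator polynomial $p(n)=I+\binom{n}{m-1}(-A)$ (your $\Sigma_n$, with $-A$ playing the role of your ``$U^{-1}(-\beta_m(T))U^{-1}$''), citing a separate result on $m$-isometric operator-weighted shifts for the extraction of the $S_j$ and their uniform boundedness.

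The one genuine gap is the construction of $Q$. Your Cauchy-dual route does not deliver what is needed: the asymptotic limit $A=\mathrm{SOT}\text{-}\lim_n T'^{\ast n}T'^{n}$ of the contraction $T'=T(T^\ast T)^{-1}$ satisfies $T'^{\ast}AT'=A$, which unwinds to $T^\ast AT=(T^\ast T)A(T^\ast T)$, \emph{not} $T^\ast AT=A$; nor is there any evident reason this $A$ dominates $-\beta_m(T)$. The paper (Lemma~\ref{LemMConcaveAreQIsometries}) proceeds differently: from $T^\ast\Delta T\le\Delta$ with $\Delta=\beta_{m-1}(T)\ge 0$ one first extracts a contraction $T_1$ on $\overline{\mathcal R(\Delta^{1/2})}$ with the intertwining $\Delta^{1/2}T=T_1\Delta^{1/2}$, and then invokes a fixed-point Nehari\,/\,weighted commutant lifting theorem---valid precisely because $T$ is expansive---to produce $Q\ge\Delta$ with $T^\ast QT=Q$. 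This stronger domination $Q\ge\beta_{m-1}(T)$ (hence $Q\ge-\beta_m(T)$, since $-\beta_m(T)=\Delta-T^\ast\Delta T\le\Delta$) is exactly what makes your operator $\Sigma_{n-1}$ bounded on $H'$. So your ``or a suitable \dots\ fixed-point argument'' hedge points in the right direction, but this step is a nontrivial external input rather than something the Cauchy dual hands you.
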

Before we prove the above theorem we need several technical lemmata. In the first lemma we give the formula for powers of operator $ W $ given by the matrix \eqref{FormDilationMatrix}; the inductive proof of this formula is left to the reader.
\begin{lemma}
    \label{LemMatrixRepresentationOfPowers}
    Let $ W\in \mathbf{B}(H\oplus\ell^{2}(\mathbb{N},H^{\prime})) $ be as in \eqref{FormDilationMatrix}. Then for $ m\in \mathbb{N}_{1} $ and $ h = (h_{0},h_{1},h_{2},\ldots)\in H\oplus \ell^{2}(\mathbb{Z},H^{\prime}) $, the $ k^{th} $ entry of $ W^{m}h $ is of the form
    \begin{equation}
        \label{FormDilationMatrixPowers}
        (W^{m}h)_{k} = \begin{cases}
            T^{m}h_{0}, & k = 0\\
            UT^{m-1}h_{0}, & k = 1\\
            S_{k-1}\cdots S_{1}UT^{m-k}h_{0}, & 2\le k \le m\\
            S_{k-1}\cdots S_{k-m}h_{k-m}, & k > m
        \end{cases}.
    \end{equation}
\end{lemma}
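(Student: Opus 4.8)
The plan is to argue by induction on the power $m$, exploiting the relation $W^{m+1} = W\,W^{m}$ together with the explicit coordinatewise action of $W$. Reading off the matrix \eqref{FormDilationMatrix}, one sees that for any $g = (g_{0},g_{1},g_{2},\ldots)$ a single application of $W$ is
\begin{equation*}
    (Wg)_{0} = Tg_{0}, \qquad (Wg)_{1} = Ug_{0}, \qquad (Wg)_{k} = S_{k-1}g_{k-1} \ \ (k\ge 2).
\end{equation*}
This is the only structural fact about $W$ that the computation needs.

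For the base case $m=1$ I would simply observe that the three displayed identities above coincide with \eqref{FormDilationMatrixPowers} for $m=1$: the entry $Tg_{0}$ matches $T^{1}h_{0}$, the entry $Ug_{0}$ matches $UT^{0}h_{0}$, and for $k\ge 2$ (that is, $k>1=m$) the entry $S_{k-1}h_{k-1}$ matches the last line, whose product $S_{k-1}\cdots S_{k-m}$ reduces to the single factor $S_{k-1}$ when $m=1$. The middle range $2\le k\le m$ is vacuous here, so nothing else is required.

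For the inductive step, assume \eqref{FormDilationMatrixPowers} holds for a given $m$ and set $g := W^{m}h$, so that $W^{m+1}h = Wg$. The coordinates $k=0$ and $k=1$ are immediate: $(Wg)_{0} = Tg_{0} = T\cdot T^{m}h_{0} = T^{m+1}h_{0}$ and $(Wg)_{1} = Ug_{0} = UT^{m}h_{0}$, matching the first two lines of the formula with $m$ replaced by $m+1$. For $k\ge 2$ one has $(Wg)_{k} = S_{k-1}g_{k-1}$, and the value of $g_{k-1}=(W^{m}h)_{k-1}$ is supplied by the inductive hypothesis according to which of its four cases the index $k-1$ falls into. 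Substituting and prepending the factor $S_{k-1}$ then produces, after relabelling, exactly the right-hand side of \eqref{FormDilationMatrixPowers} for $m+1$.

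The only point demanding care — and the step I expect to be the main (if minor) obstacle — is the bookkeeping at the boundaries of the piecewise formula, where the ranges $2\le k\le m$ and $k>m$ shift to $2\le k\le m+1$ and $k>m+1$. Concretely, one must check the transition indices: $k=2$, where $g_{1} = UT^{m-1}h_{0}$ feeds the case $2\le k\le m+1$; the interior values $3\le k\le m+1$, where $g_{k-1}=S_{k-2}\cdots S_{1} UT^{m-k+1}h_{0}$ gains the factor $S_{k-1}$ to give $S_{k-1}\cdots S_{1} UT^{(m+1)-k}h_{0}$; and $k\ge m+2$, where $g_{k-1}=S_{k-2}\cdots S_{k-1-m}h_{k-1-m}$ becomes $S_{k-1}\cdots S_{k-(m+1)}h_{k-(m+1)}$. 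Once these index shifts are tracked consistently the identity closes, completing the induction.
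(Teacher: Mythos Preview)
Your proof is correct and follows precisely the route the paper indicates: the paper itself does not write out the argument but states that ``the inductive proof of this formula is left to the reader,'' and your induction on $m$ via $W^{m+1}=W(W^{m}h)$, with the boundary-index checks at $k=2$, $k=m+1$, and $k=m+2$, is exactly that omitted verification.
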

From the above result we can easily derive the following simple condition for $ W $ to be $ m $-isometric.
\begin{lemma}
    \label{LemMIsometricityOfDilation}
    Let $ W\in \mathbf{B}(H\oplus\ell^{2}(\mathbb{N},H^{\prime})) $ be as in \eqref{FormDilationMatrix}. For $ m \in \mathbb{N}_{1} $ the following conditions are equivalent:
    \begin{enumerate}
        \item $ W $ is $ m $-isometric,
        \item the unilateral weighted shifts $ S\in \mathbf{B}(\ell^{2}(\mathbb{N},H^{\prime})) $ with weights $ (S_{j})_{j\in \mathbb{N}_{1}} $ is $ m $-isometric and
        \begin{equation}
            \label{FormDilationMatrixMIsometric}
            %\sum_{k = 0}^{m}(-1)^{m-k}\binom{m}{k} \lVert T^{k}h\rVert^{2}
            \langle \beta_{m}(T)h,h\rangle +\sum_{\ell=1}^{m}(-1)^{m-\ell}\binom{m}{\ell}\sum_{k=1}^{\ell} \lVert S_{k-1}\cdots S_{1}UT^{\ell-k}h\rVert^{2} = 0.
        \end{equation}
    \end{enumerate}
\end{lemma}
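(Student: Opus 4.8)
The plan is to reduce $m$-isometry of $W$ to the vanishing of the quadratic form $h\mapsto\langle\beta_m(W)h,h\rangle=\sum_{\ell=0}^{m}(-1)^{m-\ell}\binom{m}{\ell}\lVert W^\ell h\rVert^2$ and to evaluate this form explicitly by means of the power formula \eqref{FormDilationMatrixPowers}. Writing $h=(h_0,h_1,h_2,\ldots)$ with $h_0\in H$ and $(h_1,h_2,\ldots)\in\ell^2(\mathbb{N},H^{\prime})$, I would first record that for every $\ell\in\mathbb{N}_1$ the three families of entries in \eqref{FormDilationMatrixPowers} occupy disjoint coordinate blocks, so that
\begin{equation*}
\lVert W^\ell h\rVert^2=\lVert T^\ell h_0\rVert^2+\sum_{k=1}^{\ell}\lVert S_{k-1}\cdots S_1 U T^{\ell-k}h_0\rVert^2+\sum_{k=\ell+1}^{\infty}\lVert S_{k-1}\cdots S_{k-\ell}h_{k-\ell}\rVert^2,
\end{equation*}
with the convention that an empty product of weights is the identity; for $\ell=0$ this reads $\lVert h\rVert^2=\lVert h_0\rVert^2+\sum_{k\ge 1}\lVert h_k\rVert^2$. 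The series converge absolutely because $h\in\ell^2$ and the weights are bounded, which legitimises the rearrangements below.

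Next I would form the alternating sum $\langle\beta_m(W)h,h\rangle$ and collect the three groups of terms separately. The first group yields $\langle\beta_m(T)h_0,h_0\rangle$ directly. The second group is exactly the double sum in \eqref{FormDilationMatrixMIsometric} with the vector there replaced by $h_0$. For the third group I would substitute $j=k-\ell$ to rewrite each inner summand as $\lVert S_{j+\ell-1}\cdots S_j h_j\rVert^2$ for $j\ge 1$; since the weighted shift satisfies $\lVert S^\ell g\rVert^2=\sum_{i\ge 0}\lVert S_{i+\ell}\cdots S_{i+1}g_i\rVert^2$ (immediate from the definition of $S$), setting $g:=(h_1,h_2,\ldots)$, so that $g_i=h_{i+1}$, identifies the third group as $\langle\beta_m(S)g,g\rangle$. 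Altogether this gives the master identity
\begin{align*}
\langle\beta_m(W)h,h\rangle &=\langle\beta_m(T)h_0,h_0\rangle+\langle\beta_m(S)g,g\rangle\\
&\quad+\sum_{\ell=1}^{m}(-1)^{m-\ell}\binom{m}{\ell}\sum_{k=1}^{\ell}\lVert S_{k-1}\cdots S_1 U T^{\ell-k}h_0\rVert^2.
\end{align*}

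With this identity in hand, both implications follow by specialising $h$. If $W$ is $m$-isometric, then taking $h=(h_0,0,0,\ldots)$ annihilates the shift term and leaves precisely \eqref{FormDilationMatrixMIsometric}; taking $h=(0,h_1,h_2,\ldots)$ annihilates the first two groups, so $\langle\beta_m(S)g,g\rangle=0$ for every $g\in\ell^2(\mathbb{N},H^{\prime})$, and since $\beta_m(S)$ is self-adjoint this forces $\beta_m(S)=0$, i.e.\ $S$ is $m$-isometric. Conversely, if $S$ is $m$-isometric and \eqref{FormDilationMatrixMIsometric} holds for all $h_0\in H$, then each group of the master identity vanishes for arbitrary $h$, whence $\langle\beta_m(W)h,h\rangle=0$ and $W$ is $m$-isometric.

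The computation is essentially bookkeeping; the one place demanding care is the third group, where the reindexing $j=k-\ell$ together with the empty-product convention must be matched against the power formula of the shift so as to recognise it as $\langle\beta_m(S)g,g\rangle$ rather than an index-shifted variant. Verifying that the three families of entries are mutually orthogonal and justifying the absolute convergence that permits reordering the double series (the outer index $\ell$ is finite, while the inner shift index runs to infinity) is routine but should be stated.
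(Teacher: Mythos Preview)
Your proof is correct and follows exactly the approach the paper indicates: the paper does not actually write out a proof of this lemma, remarking only that it is easily derived from the power formula \eqref{FormDilationMatrixPowers}, and your argument is precisely the computation one obtains by carrying this out. The only thing to note is that your reindexing in the third group (taking $g=(h_1,h_2,\ldots)$ so that $g_i=h_{i+1}$) is indeed the right identification, and your remark about it deserving care is well placed.
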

If $ Q\in \mathbf{B}(H) $ is nonnegative, then we say that the operator $ T\in \mathbf{B}(H) $ is $ Q $-isometric if $ T^{\ast}QT = Q $. The next lemma states that expansive $ m $-concave operators are actually $ Q $-isometric.
\begin{lemma}
    \label{LemMConcaveAreQIsometries}
    Let $ H $ be a Hilbert space and let $ T\in \mathbf{B}(H) $ be expansive and $ m $-concave. Then there exists a nonnegative operator $ Q\in \mathbf{B}(H) $ such that $ Q\ge \beta_{m-1}(T) $ and $ T^{\ast} QT = Q $.
\end{lemma}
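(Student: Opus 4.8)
The plan is to reduce the statement to an operator Stein equation and to build the solution from the left inverse of $T$, which exists because expansivity forces $T$ to be bounded below. First I would record the elementary recursion $\beta_m(T) = T^{\ast}\beta_{m-1}(T)T - \beta_{m-1}(T)$, which follows from Pascal's rule and is the algebraic engine behind everything; rewriting it as $T^{\ast}\beta_{m-1}(T)T = \beta_{m-1}(T) + \beta_m(T)$ shows that $m$-concavity is exactly the inequality $T^{\ast}\beta_{m-1}(T)T \le \beta_{m-1}(T)$. Setting $C := -\beta_m(T) \ge 0$, I would look for $Q$ in the form $Q = \beta_{m-1}(T) + R$ with $R \ge 0$: then $T^{\ast}QT = Q$ is equivalent to the Stein equation $T^{\ast}RT - R = C$, and the requirement $Q \ge \beta_{m-1}(T)$ is equivalent to $R \ge 0$. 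Thus the whole lemma reduces to producing a bounded nonnegative solution $R$ of this equation, together with the nonnegativity of $\beta_{m-1}(T)$ (which then also yields $Q \ge 0$).

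Second I would prove $\beta_{m-1}(T) \ge 0$. For fixed $f$ put $\phi_f(n) := \lVert T^n f\rVert^2$; then $(\Delta^j \phi_f)(n) = \langle \beta_j(T)T^n f, T^n f\rangle$, where $\Delta$ is the forward difference operator. Expansivity gives $\Delta\phi_f \ge 0$ and $m$-concavity gives $\Delta^m\phi_f \le 0$. A real sequence that is nondecreasing and has nonpositive $m$-th difference must have nonnegative differences of every intermediate order: if $\Delta^{m-1}\phi_f$ were negative at some point, then, being nonincreasing (its next difference is $\Delta^m\phi_f \le 0$), it would stay below a negative constant, forcing successively $\Delta^{m-2}\phi_f, \ldots, \Delta\phi_f \to -\infty$, contradicting $\Delta\phi_f \ge 0$. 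Evaluating $(\Delta^{m-1}\phi_f)(0) \ge 0$ for every $f$ gives $\beta_{m-1}(T) \ge 0$.

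Third, the construction of $R$. Since $T^{\ast}T \ge I$ is invertible, $T$ is left invertible; let $L := (T^{\ast}T)^{-1}T^{\ast}$, the adjoint of the Cauchy dual $T(T^{\ast}T)^{-1}$, so that $LT = I$ and, from $LL^{\ast} = (T^{\ast}T)^{-1} \le I$, also $\lVert L\rVert \le 1$. I would then set
\begin{equation*}
    R := \sum_{k=1}^{\infty} L^{\ast k} C L^{k} = \sum_{k=1}^{\infty} L^{\ast k}\bigl(-\beta_m(T)\bigr)L^{k}.
\end{equation*}
Every summand is nonnegative, so the partial sums $R_N := \sum_{k=1}^{N}L^{\ast k}CL^{k}$ increase and satisfy the recursion $R_N = L^{\ast}(C + R_{N-1})L$. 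The fixed-point property is automatic from the identity $L^k T = L^{k-1}$: since $T^{\ast}L^{\ast k}CL^{k}T = L^{\ast(k-1)}CL^{k-1}$, summing gives $T^{\ast}R_N T = R_{N-1} + C$, hence $T^{\ast}RT = R + C$ in the limit. Therefore $Q := \beta_{m-1}(T) + R$ is nonnegative, dominates $\beta_{m-1}(T)$, and is $Q$-isometric, once $R$ is shown to be a bounded operator.

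The main obstacle is exactly this convergence. The crude bound $\lVert R_N\rVert \le \lVert C\rVert + \lVert R_{N-1}\rVert$ coming from $\lVert L\rVert \le 1$ only yields linear growth and is useless, because typically $\lVert L\rVert = 1$; the point is that $C = -\beta_m(T)$ is supported on the genuinely contractive directions of $L$. To control $\sum_{k\ge 1}\langle C L^k f, L^k f\rangle$ I would use $C = \beta_{m-1}(T) - T^{\ast}\beta_{m-1}(T)T$ to rewrite each term as the difference $\langle \beta_{m-1}(T)L^k f, L^k f\rangle - \langle \beta_{m-1}(T)\,TL^k f, TL^k f\rangle$ along the backward orbit $(L^k f)_k$, which is almost telescoping — the only defect being the projection $TL = T(T^{\ast}T)^{-1}T^{\ast}$ onto $\mathcal{R}(T)$ — and then estimate the accumulated defect using the polynomial growth of $\phi_f$ forced by $\Delta^m\phi_f \le 0$. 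This is where expansivity is used beyond mere left invertibility and where the quantitative content of the lemma sits; the simple diagonal model of a weighted shift, where the series terminates term by term and $R$ is visibly bounded, is the guide for the general estimate.
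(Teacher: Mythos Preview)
Your reduction is correct: with $\Delta = \beta_{m-1}(T)$ and $C = -\beta_m(T) \ge 0$, the identity $\beta_m(T) = T^{\ast}\Delta T - \Delta$ shows that $Q = \Delta + R$ works once $R \ge 0$ satisfies $T^{\ast}RT - R = C$, and your finite-difference argument for $\Delta \ge 0$ is fine. The verification that $R_N := \sum_{k=1}^N L^{\ast k}CL^{k}$ satisfies $T^{\ast}R_N T = R_{N-1} + C$ via $L^{k}T = L^{k-1}$ is also correct.

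The gap is exactly where you locate it: the boundedness of $(R_N)$. Your plan --- write $\langle C L^{k}f, L^{k}f\rangle = \langle \Delta L^{k}f, L^{k}f\rangle - \langle \Delta\, TL^{k}f, TL^{k}f\rangle$ and treat the failure of telescoping as a ``defect'' coming from the projection $P = TL$, to be absorbed by the polynomial growth of $n \mapsto \lVert T^{n} f\rVert^{2}$ --- is not a proof. The defect at step $k$ is $\langle \Delta g_{k}, g_{k}\rangle - \langle \Delta Pg_{k}, Pg_{k}\rangle$ along the \emph{backward} orbit $g_{k} = L^{k}f$; expanding $g_{k} = Pg_{k} + (I-P)g_{k}$ produces a cross term $2\operatorname{Re}\langle \Delta Pg_{k},(I-P)g_{k}\rangle$ of indefinite sign, and there is no mechanism by which polynomial growth of \emph{forward} orbits $T^{n}f$ controls these backward quantities. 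The weighted-shift model succeeds only because the backward orbit of a basis vector stays inside $\mathcal{R}(T)$, so the sum is literally telescoping --- precisely the feature that is lost in general. You also assert that expansivity enters ``beyond mere left invertibility'' at this point, but you never say how; your series uses only $\lVert L\rVert \le 1$, and that alone cannot force convergence.

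For comparison, the paper takes a completely different route and avoids any such series. From $T^{\ast}\Delta T \le \Delta$ it extracts a contraction $T_{1}$ on $\overline{\mathcal{R}(\Delta^{1/2})}$ satisfying $\Delta^{1/2}T = T_{1}\Delta^{1/2}$, and then invokes results of Treil (a fixed-point Nehari theorem) and Biswas (weighted commutant lifting), whose hypotheses are exactly an expansive $T$ together with an intertwined contraction, and whose conclusion is the existence of $Q \ge \Delta$ with $T^{\ast}QT = Q$. That external machinery is doing precisely the work your convergence step would have to do, and it is not elementary.
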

\begin{proof}
    Set $ \Delta := \beta_{m-1}(T) $. We infer from \cite[Theorem 2.5]{guOnMExpansiveMContractiveOperators} that $ \Delta \ge 0 $. Then $ m $-concavity can be written equivalently as $ T^{\ast}\Delta T \le \Delta $, which implies that
    \begin{equation}
        \label{ProofFormMConcaveContractive}
        \lVert \Delta^{\frac{1}{2}}Th\rVert \le \lVert \Delta^{\frac{1}{2}}h\rVert, \quad h\in H. 
    \end{equation}
    Define a sesquilinear form $ \varphi\colon \mathcal{R}(\Delta^{\frac{1}{2}})\times \mathcal{R}(\Delta^{\frac{1}{2}}) \to \mathbb{C} $ as follows:
    \begin{equation*}
        \varphi\left(\Delta^{\frac{1}{2}}f,\Delta^{\frac{1}{2}}g\right) = \left\langle \Delta^{\frac{1}{2}}Tf, \Delta^{\frac{1}{2}}g\right\rangle, \quad f,g\in H.
    \end{equation*}
    It follows from \eqref{ProofFormMConcaveContractive} that $ \mathcal{N}(\Delta^{\frac{1}{2}}) $ is $ T $-invariant. Hence, $ \varphi $ is well-defined. Moreover,
    \begin{align*}
        \left\lvert \varphi\left( \Delta^{\frac{1}{2}}f,\Delta^{\frac{1}{2}}g \right)\right\rvert &\le \left \lVert \Delta^{\frac{1}{2}}Tf\right \rVert \cdot \left \lVert \Delta^{\frac{1}{2}} g\right \rVert \\
        &\stackrel{\eqref{ProofFormMConcaveContractive}}{\le} \left \lVert \Delta^{\frac{1}{2}}f\right \rVert \cdot \left \lVert \Delta^{\frac{1}{2}} g\right \rVert.
    \end{align*}
    Thus, $ \varphi $ is continuous, so it can be extended to a continuous sesquilinear form on $ \overline{\mathcal{R}(\Delta^{\frac{1}{2}})} $, which, for the convenience will be denoted also by $ \varphi $. By the Riesz theorem \cite[Theorem 12.8]{rudinFunctionalAnalysis}, there exists an operator $ T_{1}\in \mathbf{B}(\overline{\mathcal{R}(\Delta^{\frac{1}{2}})}) $ such that
    \begin{equation*}
        \varphi\left( \Delta^{\frac{1}{2}}f,\Delta^{\frac{1}{2}}g \right) = \langle T_{1}\Delta^{\frac{1}{2}}f,\Delta^{\frac{1}{2}}g\rangle, \quad f,g\in H.
    \end{equation*}
    From the above and the definition of $ \varphi $ it follows that
    \begin{equation}
        \label{ProofFormCommutation}
        \Delta T - \Delta^{\frac{1}{2}}T_{1}\Delta^{\frac{1}{2}} = 0.
    \end{equation}
    In turn, for $ f\in H $ we have
    \begin{equation*}
        \Delta Tf - \Delta^{\frac{1}{2}}T_{1}\Delta^{\frac{1}{2}}f = \Delta^{\frac{1}{2}}(\Delta^{\frac{1}{2}}Tf - T_{1}\Delta^{\frac{1}{2}}f).
    \end{equation*}
    Since $ \Delta^{\frac{1}{2}}Tf - T_{1}\Delta^{\frac{1}{2}}f \in \overline{\mathcal{R}(\Delta^{\frac{1}{2}})} $ and $ \overline{\mathcal{R}(\Delta^{\frac{1}{2}})}\perp \mathcal{N}(\Delta^{\frac{1}{2}}) $, we deduce from \eqref{ProofFormCommutation} that
    \begin{equation}
        \Delta^{\frac{1}{2}}T = T_{1}\Delta^{\frac{1}{2}}.
    \end{equation}
    By \eqref{ProofFormMConcaveContractive}, it follows from the above that
    \begin{equation*}
        \lVert T_{1}\Delta^{\frac{1}{2}}f \rVert \le \lVert \Delta^{\frac{1}{2}}f\rVert, \quad f\in H,
    \end{equation*}
    so $ T_{1} $ is a contraction. Using the fact that $ T $ is expansive we infer from \cite[Theorem 2.1]{treilFixedPointNehariProblem} and \cite[Theorem 2.1]{biswasWeightedCommutantLifting} that there exists a nonnegative operator $ Q\in \mathbf{B}(H) $ such that $ T^{\ast}QT = Q \ge \Delta $, which gives us the claim.
\end{proof}
Now we are in the position to prove Theorem \ref{ThmDilationOfMConcave}.
\begin{proof}[Proof of Theorem \ref{ThmDilationOfMConcave}]
    Let $ Q\in \mathbf{B}(H) $ be the operator given by Lemma \ref{LemMConcaveAreQIsometries}. Define a sesquilinear form $ \psi\colon \mathcal{R}(Q^{\frac{1}{2}})\times \mathcal{R}(Q^{\frac{1}{2}}) \to \mathbb{C} $ as follows:
    \begin{equation*}
        \psi(Q^{\frac{1}{2}}f,Q^{\frac{1}{2}}g) = \langle \beta_{m}(T)f,g\rangle, \quad f,g\in H.
    \end{equation*}
    Set $ \Delta := \beta_{m-1}(T) $. Again, we have $ \Delta \ge 0 $. It is a matter of routine to verify that $ \beta_{m}(T) = T^{\ast} \Delta T - \Delta $. Hence,
    \begin{equation*}
        \psi(Q^{\frac{1}{2}}f,Q^{\frac{1}{2}}g) = \langle \Delta^{\frac{1}{2}}Tf,\Delta^{\frac{1}{2}}Tg\rangle - \langle \Delta^{\frac{1}{2}}f,\Delta^{\frac{1}{2}}g\rangle.
    \end{equation*}
    Since $ T^{\ast}QT = Q \ge \Delta \ge T^{\ast}\Delta T $, it follows that
    \begin{align*}
        \lvert \psi(Q^{\frac{1}{2}}f,Q^{\frac{1}{2}}g) \rvert &\le \lvert  \langle \Delta^{\frac{1}{2}}Tf,\Delta^{\frac{1}{2}}Tg\rangle\rvert + \lvert \langle \Delta^{\frac{1}{2}}f,\Delta^{\frac{1}{2}}g\rangle\rvert\\
        &\le \lVert \Delta^{\frac{1}{2}}Tf\rVert \lVert \Delta^{\frac{1}{2}}Tg\rVert + \lVert \Delta^{\frac{1}{2}}f\rVert \lVert \Delta^{\frac{1}{2}}g \rVert\\
        & \le 2\lVert Q^{\frac{1}{2}}f\rVert\lVert Q^{\frac{1}{2}}g\rVert, \quad f,g\in H.
    \end{align*}
    Therefore, $ \psi $ is a well-defined continuous sesquilinear form on $ \mathcal{R}(Q^{\frac{1}{2}}) $, so it can be extended to a continuous sesquilinear form on $ \overline{\mathcal{R}(Q^{\frac{1}{2}})} $, which will be denoted by the same letter $ \psi $. By the Riesz theorem, there exists an operator $ A\in \mathbf{B}(\overline{\mathcal{R}(Q^{\frac{1}{2}})}) $ such that 
    \begin{equation*}
        \psi(Q^{\frac{1}{2}}f,Q^{\frac{1}{2}}g) = \langle AQ^{\frac{1}{2}}f,Q^{\frac{1}{2}}g\rangle, \quad f,g\in H.
    \end{equation*}
    Since $ T $ is $ m $-concave, we have $ -A \ge 0 $, which implies that $ I-A\ge I $. Set
    \begin{equation*}
        B := (I-A)^{\frac{1}{2}}\in \mathbf{B}(\overline{\mathcal{R}(Q^{\frac{1}{2}})}).
    \end{equation*}
    Then $ B $ is positive and expansive, which implies that $ B $ is invertible. Define the polynomial $ p\in \mathbf{B}(\overline{\mathcal{R}(Q^{\frac{1}{2}})})_{m-1}[z] $ by the formula:
    \begin{equation*}
        p(z) = \frac{1}{(m-1)!}z(z-1)\cdots(z-m+2)(-A)+I, \quad z\in \mathbb{C}.
    \end{equation*}
    Then $ p(k) = I $ for $ k\in \mathbb{N}\cap[0,m-2] $, $ p(m-1) = B^{2} $ and
    \begin{equation*}
        p(n) = \frac{n(n-1)\cdots (n-m+2)}{(m-1)!}(-A)+I\ge I, \quad n\in \mathbb{N}_{m}.
    \end{equation*}
    Thus, $ p(n) $ is positive and invertible for every $ n\in \mathbb{N} $. Next, for $ h\in \overline{\mathcal{R}(Q^{\frac{1}{2}})} $, $ h\not=0 $, we have $ \langle p(n+1)h,h\rangle = \langle p(n)h,h\rangle = \langle h,h\rangle $ for $ n\in \mathbb{N}\cap[0,m-3] $ and
    \begin{equation*}
        \frac{\langle p(m-1)h,h\rangle}{\langle p(m-2)h,h\rangle} = \frac{\lVert Bh\rVert^{2}}{\lVert h\rVert^{2}} \le \lVert B \rVert^{2}.
    \end{equation*}
    In turn, if we set\footnote{Note that both numerator and denominator are positive and are the polynomials in $ n $ of the same degree, so the supremum is indeed finite.}
    \begin{equation*}
        C:= \sup_{n\in \mathbb{N}_{m-1}} \frac{(n+1)n\cdots(n-m+3)}{n(n-1)\cdots(n-m+2)} \in [1,\infty),
    \end{equation*}
    then
    \begin{align*}
        \frac{\langle p(n+1)h,h\rangle}{\langle p(n)h,h\rangle} &= \frac{(n+1)\cdots(n-m+3)\langle -Ah,h\rangle + (m-1)!\lVert h\rVert^{2}}{n\cdots(n-m+2)\langle -Ah,h\rangle + (m-1)!\lVert h\rVert^{2}}\\
        &\le \frac{Cn\cdots(n-m+1)\langle -Ah,h\rangle + (m-1)!\lVert h\rVert^{2}}{n\cdots(n-m+1)\langle -Ah,h\rangle + (m-1)!\lVert h\rVert^{2}} \le C
    \end{align*}
    for every $ n\in \mathbb{N}_{m-1} $ and $ h\in \overline{\mathcal{R}(Q^{\frac{1}{2}})} $, $ h\not=0 $. Therefore,
    \begin{equation*}
        \sup\left\{ \frac{\langle p(n+1)h,h\rangle}{\langle p(n)h,h\rangle}\colon n\in \mathbb{N}, h\in \overline{\mathcal{R}(Q^{\frac{1}{2}})}, h\not=0 \right\} \le \max\left\{ \lVert B\rVert^{2},C \right\}.
    \end{equation*} By \cite[Theorem 3.4]{buchalaMIsometricShiftsOperatorWeights} there exists an $ m $-isometric unilateral weighted shift with positive and invertible weights $ (S_{j})_{j\in \mathbb{N}_{1}}\subset \mathbf{B}(\overline{\mathcal{R}(Q^{\frac{1}{2}})}) $ such that
    \begin{equation}
        \label{ProofFormWeightOfUWSPolynomial}
        p(n) = \lvert S_{n}\cdots S_{1}\rvert^{2},\quad n\in \mathbb{N}_{1}.
    \end{equation}
    Since $ p(n)= I $ for $ n\in \mathbb{N}\cap[0,m-2] $, $ S_{1}= \ldots = S_{m-2} = I $. Combining this with the equality $ p(m-1) = B^{2} $ we obtain that $ B = S_{m-1} $. Set $ H^{\prime} = \overline{\mathcal{R}(Q^{\frac{1}{2}})} $ and let $ W\in \mathbf{B}(H\oplus \ell^{2}(\mathbb{N},H^{\prime})) $ be as in \eqref{FormDilationMatrix} with $ U = Q^{\frac{1}{2}} $ and $ (S_{j})_{j\in \mathbb{N}_{1}} $ constructed above. In view of Lemma \ref{LemMIsometricityOfDilation}, in order to prove that $ W $ is $ m $-isometric it suffices to prove that \eqref{FormDilationMatrixMIsometric} holds. Since $ T^{\ast}QT = Q $ and $ S_{m-1} = B $, it follows that \eqref{FormDilationMatrixMIsometric} takes the form
    \begin{equation}
        \label{ProofFormDilationMatrixMIsometricEquiv}
        \langle \beta_{m}(T)h,h\rangle+\sum_{\ell=1}^{m}(-1)^{m-\ell}\binom{m}{\ell}\sum_{k=1}^{\ell} \lVert Q^{\frac{1}{2}}h\rVert^{2} -\lVert Q^{\frac{1}{2}}h\rVert^{2}+\lVert BQ^{\frac{1}{2}}h\rVert^{2} = 0.
    \end{equation}
    Note that
    \begin{align*}
        \lVert BQ^{\frac{1}{2}}h\rVert^{2} - \lVert Q^{\frac{1}{2}}h\rVert^{2} &= \langle (B^{2}-I)Q^{\frac{1}{2}}h,Q^{\frac{1}{2}}\rangle\\
        &=\langle -AQ^{\frac{1}{2}}h,Q^{\frac{1}{2}}h\rangle\\
        &=-\psi(Q^{\frac{1}{2}}h,Q^{\frac{1}{2}}h)\\
        &=-\langle \beta_{m}(T)h,h\rangle, \quad h\in H.
    \end{align*}
    In turn, standard computation gives us that
    \begin{equation*}
        \sum_{\ell=1}^{m}(-1)^{m-\ell}\binom{m}{\ell}\sum_{k=1}^{\ell} \lVert Q^{\frac{1}{2}}h\rVert^{2} = \lVert Q^{\frac{1}{2}}h\rVert^{2}\sum_{\ell=1}^{m}(-1)^{m-\ell}\binom{m}{\ell}\ell = 0.
    \end{equation*}
    Therefore, \eqref{ProofFormDilationMatrixMIsometricEquiv} holds and, by Lemma \ref{LemMIsometricityOfDilation}, $ W $ is $ m $-isometric. We infer from Lemma \ref{LemMatrixRepresentationOfPowers} that
    \begin{equation*}
        P W^{n}(h,0,\ldots) = T^{n}h, \quad h\in H, \ n\in \mathbb{N},
    \end{equation*}
    where $ P $ is the projection of $ H\oplus \ell^{2}(\mathbb{N},\overline{\mathcal{R}(Q^{\frac{1}{2}})}) $ on the first coordinate. Hence, $ W $ is an $ m $-isometric dilation of $ T $.
\end{proof}
\begin{remark}
    It is worth to note that the operator $ S_{m-1} $ in the dilation $ W $ constructed in the proof of Theorem \ref{ThmDilationOfMConcave} is equal to the identity operator if and only if $ T $ is $ m $-isometric, so that in case of $ T $ being $ m $-concave, but not $ m $-isometric, the unilateral weighted shift with weights $ (S_{j})_{j\in \mathbb{N}_{1}} $ is strictly $ m $-isometric. Indeed, the careful look on the above proof reveals that $ S_{m-1} = I $ if and only if the operator $ A\in \mathbf{B}(\overline{\mathcal{R}(Q^{\frac{1}{2}})}) $ satisfying
    \begin{equation*}
        \langle AQ^{\frac{1}{2}}f,Q^{\frac{1}{2}}g\rangle = \langle \beta_{m}(T)f,g\rangle, \qquad f,g\in H,
    \end{equation*}
    is equal to 0, which is equivalent to $ T $ being $ m $-isometric. Hence, if $ S_{m-1}\not= I $, then the polynomial $ p\in \mathbf{B}(\overline{\mathcal{R}(Q^{\frac{1}{2}})})_{m-1}[z] $ is of degree precisely $ m-1 $.
\end{remark}
It turns out that in case of $ 3 $-concave operators the assumption on expansivity can be dropped and we still can obtain a $ 3 $-isometric dilation.
\begin{theorem}
    \label{ThmDilationOf3ConcaveWithoutExpansivity}
    Let $ H $ be a Hilbert space and let $ T\in \mathbf{B}(H) $ be $ 3 $-concave. Then $ T $ has a $ 3 $-isometric dilation $ W $ of the form \eqref{FormDilationMatrix} on the space $ H\oplus \ell^{2}(H^{\prime}) $, where $ H' \subset H $ is a closed subspace of $ H $.
\end{theorem}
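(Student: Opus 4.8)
The plan is to rerun the proof of Theorem \ref{ThmDilationOfMConcave} for $ m=3 $ with essentially no change, the one exception being the construction of the coupling operator. Expansivity enters that proof only through Lemma \ref{LemMConcaveAreQIsometries}, where the fixed point theorems of Treil and Biswas furnish a nonnegative $ Q $ with $ T^{\ast}QT = Q \ge \Delta $, where $ \Delta := \beta_{2}(T) $. Such a $ Q $ is genuinely unavailable here: for a scalar contraction $ 0 \le t < 1 $ (which is automatically $ 3 $-concave) the only solution of $ t^{2}Q = Q $ is $ Q = 0 $, while $ \Delta = (t^{2}-1)^{2} > 0 $, so no $ Q\ge\Delta $ with $ T^{\ast}QT=Q $ exists. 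This is the main obstacle, and the resolution is to keep the intertwining contraction produced in Lemma \ref{LemMConcaveAreQIsometries} but to \emph{drop} the relation $ T^{\ast}U^{2}T = U^{2} $, instead feeding the $ 3 $-concavity defect into the quadratic growth of a genuinely $ 3 $-isometric weighted shift.

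First I would record that $ \Delta = \beta_{2}(T) \ge 0 $. This follows from \cite[Theorem 2.5]{guOnMExpansiveMContractiveOperators} or directly: the sequence $ c_{n} := \langle \beta_{2}(T)T^{n}h,T^{n}h\rangle $ is nonincreasing by $ 3 $-concavity, so if $ c_{0}<0 $ then $ \lVert T^{n+1}h\rVert^{2} - \lVert T^{n}h\rVert^{2} = (\lVert Th\rVert^{2}-\lVert h\rVert^{2}) + \sum_{k=0}^{n-1}c_{k} \to -\infty $, contradicting $ \lVert T^{n}h\rVert^{2}\ge 0 $. Since $ \beta_{3}(T) = T^{\ast}\Delta T - \Delta \le 0 $ we have $ T^{\ast}\Delta T \le \Delta $, so exactly as in the first half of the proof of Lemma \ref{LemMConcaveAreQIsometries} (which up to \eqref{ProofFormMConcaveContractive} uses only $ \Delta \ge 0 $ and $ T^{\ast}\Delta T\le\Delta $, never expansivity) there is a contraction $ T_{1}\in \mathbf{B}(\overline{\mathcal{R}(\Delta^{\frac12})}) $ with $ \Delta^{\frac12}T = T_{1}\Delta^{\frac12} $.

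Next I would set $ H^{\prime} := \overline{\mathcal{R}(\Delta^{\frac12})} $, take $ U := \Delta^{\frac12}\in\mathbf{B}(H,H^{\prime}) $, and put
\begin{equation*}
    Y := T_{1}^{\ast}(I-T_{1}^{\ast}T_{1})T_{1} \ge 0, \qquad p(n) := I + \binom{n}{2}Y \in \mathbf{B}(H^{\prime})_{2}[z].
\end{equation*}
Then $ p(n)\ge I $ is positive and invertible for every $ n $, with $ p(1)=I $, and the ratios $ \langle p(n+1)h,h\rangle/\langle p(n)h,h\rangle $ are uniformly bounded (numerator and denominator are degree-two polynomials in $ n $ with the same leading coefficient, exactly as in the footnote to the proof of Theorem \ref{ThmDilationOfMConcave}). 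By \cite[Theorem 3.4]{buchalaMIsometricShiftsOperatorWeights} there is a $ 3 $-isometric unilateral weighted shift with positive invertible weights $ (S_{j})_{j\in\mathbb{N}_{1}}\subset\mathbf{B}(H^{\prime}) $ satisfying $ \lvert S_{n}\cdots S_{1}\rvert^{2} = p(n) $; in particular $ S_{1}=I $ and $ S_{2}=(I+Y)^{\frac12} $. Let $ W $ be given by \eqref{FormDilationMatrix} with $ U $ and these weights.

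Finally I would verify $ 3 $-isometry of $ W $ via Lemma \ref{LemMIsometricityOfDilation}: the shift is $ 3 $-isometric by construction, so only \eqref{FormDilationMatrixMIsometric} remains. Writing $ L(Z) := Z - T^{\ast}ZT $, inserting $ S_{1}^{\ast}S_{1}=I $ and $ \lvert S_{2}S_{1}\rvert^{2}=I+Y $ and collecting terms turns \eqref{FormDilationMatrixMIsometric} for $ m=3 $ into the operator identity $ L^{2}(U^{2}) + U Y U = -\beta_{3}(T) = L(\Delta) $. Since $ U^{2}=\Delta $ and $ L(\Delta)=\Delta-T^{\ast}\Delta T=:\Gamma $, we get $ L^{2}(\Delta)=\Gamma-T^{\ast}\Gamma T $, while the intertwining relation gives $ \Gamma = \Delta^{\frac12}(I-T_{1}^{\ast}T_{1})\Delta^{\frac12} $ and hence $ T^{\ast}\Gamma T = \Delta^{\frac12}T_{1}^{\ast}(I-T_{1}^{\ast}T_{1})T_{1}\Delta^{\frac12} = \Delta^{\frac12}Y\Delta^{\frac12}=UYU $. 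Substituting, the left-hand side collapses to $ \Gamma = L(\Delta) $, so \eqref{FormDilationMatrixMIsometric} holds and $ W $ is $ 3 $-isometric; that $ W $ dilates $ T $ then follows from Lemma \ref{LemMatrixRepresentationOfPowers} verbatim. I expect the only delicate point to be checking that abandoning $ T^{\ast}U^{2}T = U^{2} $ does not destroy the cancellation in \eqref{FormDilationMatrixMIsometric}; the identity above makes precise that the term $ \binom{n}{2}Y $ of the shift is designed exactly to absorb the $ 3 $-concavity defect $ \Gamma $.
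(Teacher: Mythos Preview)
Your proof is correct and yields exactly the same dilation as the paper's: the paper sets $U=\Delta^{1/2}$, $S_{1}=I$, and $S_{2}=(I-A)^{1/2}$ where $A$ is obtained via Riesz from the form $\psi(\Delta^{1/2}f,\Delta^{1/2}g)=\langle T^{\ast}\beta_{3}(T)Tf,g\rangle$, and a short computation using your intertwining relation $\Delta^{1/2}T=T_{1}\Delta^{1/2}$ shows $\Delta^{1/2}A\Delta^{1/2}=T^{\ast}\beta_{3}(T)T=-\Delta^{1/2}Y\Delta^{1/2}$, so $A=-Y$ and $S_{2}=(I+Y)^{1/2}$ coincides with yours. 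The only difference is packaging: you route the Riesz step through the contraction $T_{1}$ from the first half of Lemma \ref{LemMConcaveAreQIsometries} and then write $Y=T_{1}^{\ast}(I-T_{1}^{\ast}T_{1})T_{1}$ explicitly, whereas the paper applies Riesz directly to $\psi$; your $L$-operator bookkeeping for \eqref{FormDilationMatrixMIsometric} is likewise equivalent to the paper's line-by-line verification.
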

\begin{proof}
    We provide only a sketch of the proof, because the construction of a dilation is very similar to the one given in the proof of Theorem \ref{ThmDilationOfMConcave}. Set $ \Delta = \beta_{2}(T) $. Define a sesquilinear form $ \psi\colon \mathcal{R}(\Delta^{\frac{1}{2}})\times \mathcal{R}(\Delta^{\frac{1}{2}}) \to \mathbb{C} $ by the formula:
    \begin{equation*}
        \psi(\Delta^{\frac{1}{2}}f,\Delta^{\frac{1}{2}}):= \langle T^{\ast}\beta_{3}(T)Tf,g\rangle, \quad f,g\in H.
    \end{equation*}
    Since
    \begin{equation*}
        T^{\ast}\beta_{3}(T)T = T^{\ast 2}\Delta T^{2} - T^{\ast}\Delta T
    \end{equation*}
    and, by 3-concavity,
    \begin{equation*}
        T^{\ast 2}\Delta T^{2} \le T^{\ast }\Delta T \le \Delta
    \end{equation*}
    we get that $ \psi $ is well-defined and continuous.  Extending $ \psi $ to a continuous sesquilinear form on $ \overline{\mathcal{R}(\Delta^{\frac{1}{2}})} $ and applying the Riesz theorem we obtain an operator $ A\in \mathbf{B}(\overline{\mathcal{R}(\Delta^{\frac{1}{2}})}) $ such that
    \begin{equation}
        \langle A\Delta^{\frac{1}{2}}f,\Delta^{\frac{1}{2}}g\rangle = \langle T^{\ast}\beta_{3}(T)Tf,g\rangle, \quad f,g\in H.
    \end{equation}
    It follows from the inequality $ \beta_{3}(T)\le 0 $ that $ A\le 0 $. Setting $ B = (I-A)^{\frac{1}{2}} $ and proceeding as in the proof of Theorem \ref{ThmDilationOfMConcave} we obtain a 3-isometric unilateral weighted shift with positive and invertible weights $ (S_{j})_{j\in \mathbb{N}_{1}} $ such that $ S_{1} = I $ and $ S_{2} = B $. Let $ W $ be as in \eqref{FormDilationMatrix} with $ U = \Delta^{\frac{1}{2}} $ and $ (S_{j})_{j\in \mathbb{N}_{1}} $ given above. By Lemma \ref{LemMIsometricityOfDilation}, it is enough to verify that \eqref{FormDilationMatrixMIsometric} holds to show that $ W $ is 3-isometric. For $ h\in H $ we have
    \begin{align*}
        &\langle \beta_{3}(T)h,h\rangle + \langle T^{\ast 2}\Delta T^{2}h,h\rangle -2\langle T^{\ast}\Delta Th,h\rangle + \langle S_{2}^{2}\Delta^{\frac{1}{2}}h,\Delta^{\frac{1}{2}}h\rangle\\
        &=\langle T^{\ast}\Delta Th,h\rangle - \langle \Delta h,h\rangle + \langle T^{\ast 2}\Delta T^{2}h,h\rangle\\ &-2\langle T^{\ast}\Delta Th,h\rangle + \langle S_{2}^{2}\Delta^{\frac{1}{2}}h,\Delta^{\frac{1}{2}}h\rangle\\
        &=\langle T^{\ast 2}\Delta T^{2}h,h\rangle - \langle T^{\ast}\Delta Th,h\rangle -\langle A\Delta^{\frac{1}{2}}h,\Delta^{\frac{1}{2}}h\rangle\\
        &=\langle T^{\ast}\beta_{3}(T)Th,h\rangle - \psi(\Delta^{\frac{1}{2}}h,\Delta^{\frac{1}{2}}h) = 0.
    \end{align*}
    Hence, \eqref{FormDilationMatrixMIsometric} holds. Therefore, $ W $ is a 3-isometric dilation of $ T $.
\end{proof}
It is well known that an isometric dilation of a contraction can always be chosen to be minimal (see \cite[Chapter I, Theorem 4.1]{SzNagyHarmonicAnalysisOfOperators}). In the next lemma we solve the problem of minimality of the dilations constructed in Theorems \ref{ThmDilationOfMConcave} and \ref{ThmDilationOf3ConcaveWithoutExpansivity}.
\begin{lemma}
    \label{LemMinimalDilations}
    Let $ H $ be a Hilbert space and let $ T\in \mathbf{B}(H) $. Suppose $ U\in \mathbf{B}(H) $ is non-negative and $ (S_{j})_{j\in \mathbb{N}_{1}}\subset \mathbf{B}(\overline{\mathcal{R}(U)}) $ is a uniformly bounded sequence of positive and invertible operators. Let $ W\in \mathbf{B}\left( H\oplus \ell^{2}(\mathbb{N},\overline{\mathcal{R}(U)})  \right) $ be as in \eqref{FormDilationMatrix}. Then
    \begin{equation}
        \label{FormMinimalityCondition}
        H\oplus \ell^{2}(\mathbb{N},\overline{\mathcal{R}(U)}) = \bigvee_{n=0}^{\infty} W^{n}H.
    \end{equation}
\end{lemma}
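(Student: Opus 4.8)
The plan is to establish the nontrivial inclusion $H\oplus \ell^{2}(\mathbb{N},\overline{\mathcal{R}(U)}) \subseteq \bigvee_{n=0}^{\infty} W^{n}H$, the reverse inclusion being automatic. Write $H' = \overline{\mathcal{R}(U)}$, $M = \bigvee_{n=0}^{\infty} W^{n}H$, and identify $H$ with the first coordinate, so that a generic element of the ambient space is $(h_0,h_1,h_2,\ldots)$ with $h_0\in H$ and $h_j\in H'$ for $j\ge 1$. Set $L_0 := H$ (the copy of $H$ in position $0$) and, for $k\ge 1$, let $L_k$ be the subspace of vectors supported in position $k$, i.e. those of the form $(0,\ldots,0,f,0,\ldots)$ with $f\in H'$ in the $k$-th slot. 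It suffices to prove $L_k\subseteq M$ for every $k\ge 0$: the algebraic span of $\bigcup_{k\ge 0}L_k$ consists of the finitely supported sequences, which are dense in $H\oplus \ell^{2}(\mathbb{N},H')$, so the closed span of the $L_k$ is the whole space.

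First I would feed a vector $(h_0,0,0,\ldots)$ into Lemma \ref{LemMatrixRepresentationOfPowers}. Since all higher components vanish, the case $k>m$ of \eqref{FormDilationMatrixPowers} contributes nothing, and we obtain
\begin{equation*}
    W^{n}(h_0,0,\ldots) = \bigl(T^{n}h_0,\; UT^{n-1}h_0,\; S_1UT^{n-2}h_0,\;\ldots,\; S_{n-1}\cdots S_1Uh_0,\; 0,0,\ldots\bigr),
\end{equation*}
with nonzero entries only in positions $0$ through $n$; for $1\le j\le n$ the $j$-th entry equals $S_{j-1}\cdots S_1 UT^{n-j}h_0$, with the convention that the product of weights is the identity when $j=1$.

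Next I would argue by induction on $k$ that $L_k\subseteq M$. The base case is $L_0 = W^{0}H\subseteq M$. For the inductive step, assume $L_0,\ldots,L_{k-1}\subseteq M$ and fix $h_0\in H$. In $W^{k}(h_0,0,\ldots)$ the $0$-th entry $T^{k}h_0$ lies in $L_0$, and for each $j$ with $1\le j\le k-1$ the $j$-th entry $S_{j-1}\cdots S_1 UT^{k-j}h_0\in H'$, viewed as supported in position $j$, lies in $L_j$; all of these belong to $M$ by hypothesis. Subtracting them from $W^{k}(h_0,0,\ldots)\in M$ leaves the vector supported in position $k$ with entry $S_{k-1}\cdots S_1 Uh_0$, which therefore lies in $M$. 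Letting $h_0$ range over $H$, these entries range over $(S_{k-1}\cdots S_1)\mathcal{R}(U)$.

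The only point needing care — and the sole place the hypotheses on the weights are used — is upgrading this to all of $L_k$. Since each $S_j$ is positive and invertible, the composition $S_{k-1}\cdots S_1$ is invertible with bounded inverse, hence a homeomorphism of $H'$, so it carries the dense subspace $\mathcal{R}(U)$ onto a dense subspace of $H'$; that is, $\overline{(S_{k-1}\cdots S_1)\mathcal{R}(U)} = H'$. As $M$ is closed, it contains every vector supported in position $k$ with entry in $H'$, i.e. $L_k\subseteq M$. This closes the induction and yields \eqref{FormMinimalityCondition}. I do not expect a serious obstacle here: the argument is a straightforward ``peeling'' of successive coordinates, and its essential ingredient is merely that invertibility of the weights preserves the density of $\mathcal{R}(U)$ in $H'$.
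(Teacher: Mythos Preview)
Your argument is correct and follows essentially the same inductive ``peeling'' as the paper's proof, the only cosmetic difference being that the paper argues via the orthogonal complement (taking $\mathbf{h}\in K^{\perp}$ and showing inductively that each coordinate $h_{j}$ vanishes), whereas you prove the inclusion $L_k\subseteq M$ directly by subtracting lower-order coordinates. Both rest on the same formula from Lemma~\ref{LemMatrixRepresentationOfPowers} and the same key observation that invertibility of the $S_j$ turns the dense set $\mathcal{R}(U)$ into a dense set.
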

\begin{proof}
    Let $ K $ be the right hand side of \eqref{FormMinimalityCondition}. It suffices to show that $ K^{\perp} = \{0\} $. Let $ \mathbf{h} = (h_{j})_{j\in \mathbb{N}}\in K^{\perp} $. We will prove by induction that $ h_{j} = 0 $ for every $ j\in \mathbb{N} $. First, note that $ h_{0} = 0 $, because $ (h,0,\ldots) \in K $ for every $ h\in H $. For the proof of the equality $ h_{1} = 0 $ observe that
    \begin{equation*}
        0 = \langle \mathbf{h}, W(h,0,0,\ldots)\rangle = \langle h_{1},Uh\rangle, \qquad h\in H,
    \end{equation*}
    so $ h_{1}\perp \mathcal{R}(U) $. Since we also have $ h_{1}\in \overline{\mathcal{R}(U)} $, it follows that $ h_{1} = 0 $. Now assume that $ h_{j} = 0 $ for $ j\in \mathbb{N}\cap[0,n] $ with some $ n\in \mathbb{N}_{1} $. By Lemma \ref{LemMatrixRepresentationOfPowers}, it follows from the inductive hypothesis that for every $ h\in H $,
    \begin{equation*}
        0 = \langle \mathbf{h}, W^{n+1}(h,0,0,\ldots)\rangle = \langle h_{n+1},S_{n}\cdots S_{1}Uh\rangle = \langle S_{1}\cdots S_{n}h_{n+1},Uh\rangle.
    \end{equation*}
    Hence, $ S_{1}\cdots S_{n}h_{n+1}\perp \mathcal{R}(U) $. On the other hand, $ S_{1}\cdots S_{n}h_{n+1}\in \mathcal{R}(S_{1}) = \overline{\mathcal{R}(U)} $. Therefore, $ S_{1}\cdots S_{n}h_{n+1} = 0 $. Since $ S_{1},\ldots,S_{n} $ are invertible, we have $ h_{n+1} = 0 $, what completes the proof.
\end{proof}
\begin{corollary}
    \label{CorMinimalDilations}
    If $ T\in \mathbf{B}(H) $ is an expansive $ m $-concave operator on a Hilbert space $ H $ ($ m\in \mathbb{N}_{2} $), then $ T $ has a minimal $ m $-isometric dilation of the form \eqref{FormDilationMatrix}. Moreover, if $ T $ is 3-concave (not necessarily expansive), then $ T $ has a minimal 3-isometric dilation of the form \eqref{FormDilationMatrix}.
\end{corollary}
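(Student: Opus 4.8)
The plan is to read off minimality directly from the structure of the dilations already produced, by checking that each of them meets the hypotheses of Lemma \ref{LemMinimalDilations}. The minimality condition in that lemma depends only on the matrix form \eqref{FormDilationMatrix}, on $ U $ being nonnegative, and on the weights $ (S_{j})_{j\in \mathbb{N}_{1}} $ being uniformly bounded, positive and invertible on $ \overline{\mathcal{R}(U)} $; hence no new construction is needed and the whole argument reduces to matching notation and invoking the lemma.

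First I would treat the expansive $ m $-concave case. Applying Theorem \ref{ThmDilationOfMConcave} produces an $ m $-isometric dilation $ W $ of the form \eqref{FormDilationMatrix} acting on $ H\oplus \ell^{2}(\mathbb{N},H^{\prime}) $, where $ U = Q^{\frac{1}{2}} $ and $ H^{\prime} = \overline{\mathcal{R}(Q^{\frac{1}{2}})} = \overline{\mathcal{R}(U)} $. By construction $ U $ is nonnegative and the weights $ (S_{j})_{j\in \mathbb{N}_{1}} $ are positive and invertible operators on $ \overline{\mathcal{R}(U)} $. These weights are moreover uniformly bounded, since they are the weights of an $ m $-isometric unilateral weighted shift $ S\in \mathbf{B}(\ell^{2}(\mathbb{N},\overline{\mathcal{R}(U)})) $, whence $ \lVert S_{j}\rVert \le \lVert S\rVert $ for every $ j $. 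Thus the hypotheses of Lemma \ref{LemMinimalDilations} are satisfied, and \eqref{FormMinimalityCondition} gives $ H\oplus \ell^{2}(\mathbb{N},\overline{\mathcal{R}(U)}) = \bigvee_{n=0}^{\infty} W^{n}H $, which is exactly the minimality of $ W $. For the second assertion I would argue identically, replacing Theorem \ref{ThmDilationOfMConcave} by Theorem \ref{ThmDilationOf3ConcaveWithoutExpansivity}: the latter yields a $ 3 $-isometric dilation of the form \eqref{FormDilationMatrix} with $ U = \Delta^{\frac{1}{2}} $, where $ \Delta = \beta_{2}(T) $, again nonnegative, and with positive, invertible, uniformly bounded weights on $ \overline{\mathcal{R}(U)} $, so that Lemma \ref{LemMinimalDilations} delivers minimality verbatim.

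I do not expect any genuine obstacle here, as the statement is a direct corollary of two theorems and one lemma. The only points requiring care are bookkeeping: confirming that the space underlying the dilation is precisely $ H\oplus \ell^{2}(\mathbb{N},\overline{\mathcal{R}(U)}) $ with $ U $ the nonnegative operator occupying the $ (1,0) $ entry of \eqref{FormDilationMatrix}, and recording that boundedness of the associated weighted shift forces its weights to be uniformly bounded, so that Lemma \ref{LemMinimalDilations} applies without modification in both cases.
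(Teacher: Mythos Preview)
Your proposal is correct and follows exactly the same approach as the paper, whose proof reads in full: ``Consult the proof of Theorems \ref{ThmDilationOfMConcave} and \ref{ThmDilationOf3ConcaveWithoutExpansivity} and use Lemma \ref{LemMinimalDilations}.'' You simply spell out the bookkeeping (that $H'=\overline{\mathcal{R}(U)}$ in both constructions, that $U$ is nonnegative, and that the weights are uniformly bounded as entries of a bounded shift) which the paper leaves implicit.
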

\begin{proof}
    Consult the proof of Theorems \ref{ThmDilationOfMConcave} and \ref{ThmDilationOf3ConcaveWithoutExpansivity} and use Lemma \ref{LemMinimalDilations}.
\end{proof}
Beside the fact that every contraction possess a minimal isometric dilation, it was proved in \cite[Chapter I, Theorem 4.1]{SzNagyHarmonicAnalysisOfOperators} that all minimal isometric dilations are isomorphic. In the following example we will show that it may not hold in the case of 2-isometric dilations.
\begin{example}
    In \cite[Theorem 2.1]{badeaCauchyDual2IsometricLiftingsOfConcave} the author has constructed a 2-isometric dilation of 2-concave operator and this dilation has a matrix representation of the form
    \begin{equation*}
        W^{\prime} = \begin{bmatrix}
            T & 0 & 0 & \ddots\\
            U & 0 & 0 & \ddots\\
            0 & I & 0 & \ddots\\
            0 & 0 & I & \ddots\\
            \ddots & \ddots & \ddots & \ddots 
        \end{bmatrix} \in \mathbf{B}\left( H\oplus \ell^{2}(\mathbb{N},\overline{\mathcal{R}(U)}) \right),
    \end{equation*}
    with $ U_{1} = (Q-\beta_{1}(T))^{\frac{1}{2}} $, where $ Q\in \mathbf{B}(H) $ is as in Lemma \ref{LemMConcaveAreQIsometries}; $ W^{\prime} $ was shown to be a minimal 2-isometric dilation (this follows also immediately from Lemma \ref{LemMinimalDilations}). On the other hand, the dilation constructed in Theorem \ref{ThmDilationOfMConcave} is of the form
    \begin{equation*}
        W^{\prime} = \begin{bmatrix}
            T & 0 & 0 & \ddots\\
            Q^{\frac{1}{2}} & 0 & 0 & \ddots\\
            0 & S_{1} & 0 & \ddots\\
            0 & 0 & S_{2} & \ddots\\
            \ddots & \ddots & \ddots & \ddots 
        \end{bmatrix} \in \mathbf{B}\left( H\oplus \ell^{2}(\mathbb{N},\overline{\mathcal{R}(Q^{\frac{1}{2}})}) \right),
    \end{equation*}
    where $ Q\in \mathbf{B}(H) $ is also as in Lemma \ref{LemMConcaveAreQIsometries} and $ (S_{j})_{j\in \mathbb{N}_{1}}\subset \mathbf{B}(\overline{\mathcal{R}(Q^{\frac{1}{2}})}) $ is a certain uniformly bounded sequence of positive and invertible operators; moreover, by Corollary \ref{CorMinimalDilations}, this dilation is also minimal. Let $ T\in \mathbf{B}(H) $ be a 2-concave non-isometric operator. We will show that for such a $ T $, the dilations $ W $ and $ W^{\prime} $ are not isomorphic. Suppose to the contrary that there exists a unitary operator $ V\colon H\oplus \ell^{2}(\mathbb{N},\overline{\mathcal{R}(Q^{\frac{1}{2}})}) \to H\oplus \ell^{2}(\mathbb{N},\overline{\mathcal{R}(U)}) $ making them isomorphic. Then, by Lemma \ref{LemMatrixRepresentationOfPowers},
    \begin{align*}
        \lVert VW(h,0,0,\ldots)\rVert^{2} &= \lVert W(h,0,0,\ldots)\rVert^{2} \\
        &=\lVert (Th,Q^{\frac{1}{2}}h,0,\ldots)\rVert^{2} \\
        &= \lVert Th\rVert^{2} + \lVert Q^{\frac{1}{2}}h\rVert^{2}, \quad h\in H,
    \end{align*}
    and
    \begin{align*}
        \lVert W^{\prime} V(h,0,0,\ldots)\rVert^{2} &= \lVert W^{\prime}(h,0,0,\ldots)\rVert^{2}\\
        &= \lVert (Th,Uh,0,\ldots)\rVert^{2} \\
        &= \lVert Th\rVert^{2}+\lVert Uh\rVert^{2}, \quad h\in H.
    \end{align*}
    Since $ \lVert VW(h,0,0,\ldots)\rVert = \lVert W^{\prime} V(h,0,0,\ldots)\rVert $, we have $ \lVert Q^{\frac{1}{2}}h\rVert = \lVert Uh\rVert $ for every $ h\in H $. Combining this with the equality $ U = (Q-\beta_{1}(T))^{\frac{1}{2}} $, we obtain that $ \beta_{1}(T) = 0 $ or, in other words, $ T $ is isometric. This gives us a contradiction.
\end{example}
	\bibliographystyle{plain}
	\bibliography{references}
\end{document}